\pdfoutput=1
\documentclass{article}

\usepackage{graphicx}
\usepackage{amsmath,amsfonts,amssymb,amsthm}
\newtheorem{proposition}{Proposition}
\newtheorem{corollary}{Corollary}
\theoremstyle{remark}
\newtheorem{remark}{Remark}
\newtheorem{example}{Example}
\begin{document}

\title{High order weak approximation schemes for L\'evy-driven SDEs}
\author{Peter Tankov\footnote{Peter Tankov, Centre de Math\'ematiques Appliqu\'ees,
  Ecole Polytechnique, Palaiseau, France. Email \texttt{peter.tankov@polytechnique.org}}
}
\date{}
\maketitle

\begin{abstract}
We propose new jump-adapted weak approximation schemes for stochastic
differential equations driven by pure-jump L\'evy processes. The idea
is to replace the driving L\'evy process $Z$ with a finite
intensity process which has the same L\'evy measure outside a
neighborhood of zero and matches a given number of moments of
$Z$. By matching 3 moments we construct a scheme which works
for all L\'evy measures and is superior to the existing approaches both
in terms of convergence rates and easiness of implementation. In the
case of L\'evy processes with stable-like behavior of small jumps, we
construct schemes with arbitrarily high rates of convergence by
matching a sufficiently large number of moments. 
\end{abstract}

\medskip

\noindent Key words: L\'evy-driven stochastic differential equation, Euler
scheme, high order discretization schemes, jump-adapted discretization, weak approximation.

\medskip

\noindent 2010 Mathematics Subject Classification: Primary 60H35, Secondary 65C05, 60G51. 

\section{Introduction}
\label{pt.intro}

Let $Z$ be a $d$-dimensional L\'evy process without diffusion component, that
is,
\[
Z_{t}=\gamma t + \int_{0}^{t}\int_{|y|\leq1}y\widehat{N}(dy,ds)+\int_{0}%
^{t}\int_{|y|>1}yN(dy,ds),\quad t\in\lbrack0,1].
\]
Here $\gamma\in\mathbb{R}^{d}$, $N$ is a Poisson random measure on
$\mathbb{R}^{d} \times[0,\infty)$ with intensity $\nu$ satisfying $\int
1\wedge\|y\|^{2}\nu(dy)<\infty$ and $\widehat{N}(dy,ds)=N(dy,ds)-\nu(dy)ds$
denotes the compensated version of $N$. We study the
case when $\nu(\mathbb{R}^{d})=\infty$, that is, there is an infinite number of jumps in every interval of nonzero length a.s. Further, let $X$ be an $\mathbb{R}^{n}$-valued
adapted stochastic process, unique solution of the stochastic differential
equation
\begin{align}
X_{t}=X_{0}+\int_{0}^{t}h(X_{s-})dZ_{s},\quad t\in\lbrack0,1], \label{pt.sde}%
\end{align}
where $h$ is an $m\times d$ matrix.

In this article we are interested in the numerical evaluation of
$E[f(X_1)]$ for a sufficiently smooth function $f$ by Monte Carlo, via discretization
and simulation of the process $X$. We propose new weak approximation
algorithms for \eqref{pt.sde} and study their rate of convergence. 

The traditional method to simulate
$X$ is to use the Euler scheme with constant time step
$$
\hat X^n_{\frac{i+1}{n}} = \hat X^n_{\frac{i}{n}} + h(\hat X^n_{\frac{i}{n}})(Z_{\frac{i+1}{n}}-Z_{\frac{i}{n}}).
$$
This method has the convergence rate \cite{protter_talay,jkmp.05} 
$$
|E[f(X_1)]-E[f(\hat X^n_1)]|\leq \frac{C}{n}
$$ 
but suffers from
two difficulties: first, for a general L\'{e}vy measure $\nu$, there is no
available algorithm to simulate the increments of the driving L\'{e}vy process and second, a large jump of $Z$ occurring between two discretization points can
lead to an important discretization error.

A natural idea due to Rubenthaler \cite{rubenthaler} (in the context of
finite-intensity jump processes, this idea appears also in
\cite{bruti_liberati.platen.07,mstz.08}), 
is to approximate $Z$ with a compound Poisson process by replacing the
small jumps with their expectation
$$
Z^\varepsilon_t:= \gamma_\varepsilon t + \int_0^t\int_{|y|>\varepsilon}yN(dy,ds),\quad \gamma_\varepsilon = \gamma - \int_{\varepsilon<|y|\leq 1}y\nu(dy),
$$
and then place discretization dates at all jump times of
$Z^\varepsilon$.

The computational complexity of simulating a single trajectory using
this method becomes a random variable, but the convergence rate may be
computed in terms of the \emph{expected} number of discretization dates, proportional to $\lambda_\varepsilon = \int_{|y|\geq \varepsilon}\nu(dy)$. When the jumps of $Z$ are
highly concentrated around zero, however, this approximation is too
rough and the convergence rates can be arbitrarily slow.

In \cite{kohatsu.tankov.09}, the authors proposed a scheme which
builds on Rubenthaler's idea of using the times of large jumps of $Z$ as
discretization dates but achieves better convergence rates. Their idea
is, first, to approximate the small jumps of $Z$ with a suitably
chosen Brownian motion, in order to match not only the first but also
the second moment of $Z$, and second, to construct an approximation to
the solution of the  continuous SDE between the times of large
jumps. 
Similar ideas of Gaussian correction were recently used in
\cite{dereich.11} in the context of multilevel Monte Carlo methods for
the problem \eqref{pt.sde}. 
However, although diffusion approximation of small jumps
improves the convergence rate, there are limits on how well the small
jumps of a L\'evy process can be approximated by a Brownian motion. In
particular, the Brownian motion is a symmetric process, while a L\'evy
process may be asymmetric.


In this paper we develop new jump-adapted discretization schemes based
on approximating the L\'evy process $Z$ with a finite intensity L\'evy
process $Z^\varepsilon$ without diffusion part. Contrary to previous
works, instead of simply truncating jumps smaller than $\varepsilon$,
we construct efficient finite intensity approximations which match a
given number of moments of $Z$. These approximations are superior to
the existing approaches in two ways. First, given that $Z^\varepsilon$
is a finite intensity L\'evy process, the solution to \eqref{pt.sde} with
$Z$ replaced by $Z^\varepsilon$ is easy to compute, either explicitly
or with a fast numerical method, making it straightforward to
implement the scheme. Second, by choosing the parameters of
$Z^\varepsilon$ in a suitable manner, one can, in principle, match an
arbitrary number of moments of $Z$ and obtain a discretization scheme
with an arbitrarily high convergence rate.

The paper is structured as follows. In Section \ref{pt.mmatch}, we present
the main idea of moment matching approximations and provide a basic
error bound for such schemes. In Section \ref{pt.3mom}, we introduce our
first scheme which is based on matching 3 moments of $Z$ and can be used for general L\'evy
  processes. For L\'evy processes with stable-like behavior of small
  jumps near zero, the scheme is shown to be rate-optimal. 
Finally, Section \ref{pt.highorder} shows how schemes of arbitrary
  order can be constructed by matching additional moments, once again,
  in the context of L\'evy processes with stable-like behavior of small jumps.

\section{Moment matching compound Poisson approximations}
\label{pt.mmatch}
Let $Z^\varepsilon$ be a finite intensity L\'evy process without
diffusion part  approximating $Z$ in a certain sense to be defined later: 
\begin{align}
Z^\varepsilon_t:= \gamma_\varepsilon t + \int_0^t\int_{\mathbb
  R^d}yN^\varepsilon(dy,ds),
\label{pt.Zeps}
\end{align}
where $N^\varepsilon$ is a Poisson random measure with intensity
measure $dt\times \nu^\varepsilon$ such that $\lambda_\varepsilon :=
\nu^\varepsilon(\mathbb R^d)<\infty$. 

In this paper we propose to approximate the process \eqref{pt.sde} by the
solution to
\begin{align}
d\hat X_t = h(\hat
  X_{t-})dZ^\varepsilon_t,\quad \hat X_0 = X_0,\label{pt.approxsol}
\end{align}
which can be computed by applying the Euler scheme at the jump times
of $Z^\varepsilon$ and solving the deterministic ODE $d\hat X_t =
h(\hat X_t) \gamma_\varepsilon dt$ explicitly (or by a Runge-Kutta
method\footnote{In this paper, to simplify the treatment, we assume
  that the ODE is solved explicitly. Upper bounds on the additional
  error introduced by the Runge-Kutta method are given in
  \cite[Proposition 7]{kohatsu.tankov.09}. These bounds can be made
  arbitrarily small by taking a Runge-Kutta algorithm of sufficiently high order. })
  between these jump times.  The following proposition provides a basic
  estimate for the weak error of such an approximation scheme. 
We impose the following alternative regularity assumptions
on the functions $f$ and $h$:
\begin{description}
\item[\textbf{$\mathbf{(H_n)}$}] $f\in C^n$, $h\in C^n$ $f^{(k)}$ and $h^{(k)}$ are bounded for $1\leq k \leq n$ and $\int z^{2n} \nu(dz)<\infty$.
\item[\textbf{$\mathbf{(H'_n)}$}] $f\in C^n$, $h\in C^n$, $h^{(k)}$ are bounded for $1\leq k \leq n$, $f^{(k)}$ have at most polynomial growth for $1\leq k \leq n$ and $\int |z|^{k} \nu(dz)<\infty$ for all $k\geq 1$. 
\end{description}
\begin{proposition}\label{pt.basicbound}
Let $Z$ and $\hat Z$ be L\'evy processes with characteristic triplets
$(0,\nu,\gamma)$ and $(0,\hat \nu,\hat \gamma)$ respectively, and let
$X$ and $\hat X$ be the corresponding solutions of SDE \eqref{pt.sde}. 
Assume $\hat \gamma = \gamma$, $\hat \nu = \nu$ on $\{\| x \| >1\}$, either $\mathbf{(H_{n})}$ or $\mathbf{(H^{\prime}_{n})}$ for
$n\geq 3$ and
\begin{align}
\int_{\mathbb R^d} x_{i_1} \dots x_{i_k} \nu(dx)  = \int_{\mathbb R^d}
x_{i_1} \dots x_{i_k} \hat \nu(dx), \quad 2 \leq k \leq n-1,\quad
1\leq i_k \leq d.\label{pt.momentmatch}
\end{align}
Then
\[
|E[f(\hat X_{1}) - f(X_{1})]| \leq C \int_{\mathbb R^d} \|x\|^n|d\nu -d \hat \nu|,
\]
where the constant $C$ may depend on $f$, $g$, $x$ and $\nu$ but not on
$\hat \nu$. 
\end{proposition}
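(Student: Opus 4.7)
My approach rests on the backward Kolmogorov equation for $X$. Define $u(t,x):=E[f(X_1^{t,x})]$, where $X^{t,x}$ denotes the solution of \eqref{pt.sde} started from $x$ at time $t$. Under $(H_n)$ or $(H'_n)$, standard results on differentiability of jump-SDE flows combined with the integrability assumptions on $\nu$ imply that $u\in C^{1,n}([0,1]\times\mathbb{R}^{n})$, with derivatives in $x$ that are bounded in the $(H_n)$ case and of at most polynomial growth in the $(H'_n)$ case. The function $u$ satisfies $\partial_t u+\mathcal L^\nu u=0$, $u(1,\cdot)=f$, where
\[
\mathcal L^\nu\varphi(x)=\gamma^{\top}h(x)^{\top}\nabla\varphi(x)+\int_{\mathbb{R}^{d}}\!\bigl[\varphi(x+h(x)z)-\varphi(x)-\mathbf 1_{\|z\|\leq 1}z^{\top}h(x)^{\top}\nabla\varphi(x)\bigr]\nu(dz).
\]

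The next step is to apply It\^o's formula to $u(t,\hat X_t)$ and take expectations. Using the PIDE satisfied by $u$, this yields
\[
E[f(\hat X_1)]-E[f(X_1)]=E\!\int_0^1(\mathcal L^{\hat\nu}-\mathcal L^\nu)u(t,\hat X_t)\,dt.
\]
Since $\hat\gamma=\gamma$ and $\hat\nu=\nu$ on $\{\|z\|>1\}$, the drift and large-jump contributions cancel, leaving an integral over $\{\|z\|\leq 1\}$ against the signed measure $d\nu-d\hat\nu$. On that set I Taylor-expand $u(t,\cdot)$ around $x$ up to order $n$:
\[
u(t,x+h(x)z)-u(t,x)-z^{\top}h(x)^{\top}\nabla u(t,x)=\sum_{k=2}^{n-1}P_k(t,x,z)+R_n(t,x,z),
\]
where each $P_k(t,x,z)$ is a homogeneous polynomial of degree $k$ in the components of $z$ (with coefficients involving $\partial^\alpha u(t,x)$, $|\alpha|\leq k$, and entries of $h(x)$) and the remainder satisfies $|R_n(t,x,z)|\leq C(x)\|z\|^n$, with $C(x)$ bounded under $(H_n)$ and of polynomial growth under $(H'_n)$. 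The moment-matching identity \eqref{pt.momentmatch} forces $\int P_k(t,x,z)(d\nu-d\hat\nu)(z)=0$ for $k=2,\ldots,n-1$, and the remainder produces exactly the claimed bound. Combining this with uniform-in-$\varepsilon$ moment estimates for $\hat X_t$ closes the argument.

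The main technical obstacle is the regularity and growth control of $u$. Under $(H_n)$ this is by now classical, but under $(H'_n)$ one has to propagate the polynomial growth of $f^{(k)}$ through derivative-in-initial-condition estimates for the L\'evy-driven SDE; the assumption that $\nu$ has moments of all orders is exactly what is needed to bound arbitrary moments of the Jacobian flow $\nabla_x X^{t,x}$ and of $X^{t,x}_s$ itself. Once these flow estimates are available, the moment-matching argument is essentially algebraic and the final remainder bound is immediate.
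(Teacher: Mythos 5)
Your overall route is exactly the paper's: define $u(t,x)=E^{(t,x)}[f(X_1)]$, invoke its $C^{1,n}$ regularity and the backward Kolmogorov PIDE (which the paper does not re-prove but imports from Lemma 13 of Kohatsu-Higa and Tankov), apply It\^o's formula to $u(t,\hat X_t)$, cancel the drift and large-jump contributions using $\hat\gamma=\gamma$ and $\hat\nu=\nu$ on $\{\|x\|>1\}$, and Taylor-expand so that the moment-matching condition \eqref{pt.momentmatch} annihilates the terms of orders $2,\dots,n-1$, leaving a remainder controlled by $\int\|z\|^n\,|d\nu-d\hat\nu|$ times derivatives of $u$ and powers of $h$ along $\hat X$. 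Up to that point your decomposition coincides with the paper's and is sound.

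The genuine gap is the step you dispatch in one clause (``uniform-in-$\varepsilon$ moment estimates for $\hat X_t$ close the argument''). The conclusion of the proposition is not merely the bound but the assertion that $C$ does not depend on $\hat\nu$; since $\hat X$ is driven by $\hat Z$, any off-the-shelf moment estimate for $\sup_t|\hat X_t|$ depends on the characteristics of $\hat Z$, i.e.\ on $\hat\nu$, and in the intended application ($\hat\nu=\nu_\varepsilon$ with total mass $\lambda_\varepsilon\to\infty$) this dependence is precisely what must be excluded, otherwise the convergence-rate corollaries in Sections \ref{pt.3mom} and \ref{pt.highorder} do not follow. The paper devotes the entire second half of its proof to this point: following Lemma 11 of Kohatsu-Higa and Tankov,
\begin{align*}
E[\sup_{0\leq t\leq 1}|\hat X_t|^p]\leq C(1+|x|^p)\exp\left[c\left(|\bar\gamma|^p+\int_{\mathbb R}|z|^p\,\hat\nu(dz)+\left(\int_{\mathbb R}z^2\,\hat\nu(dz)\right)^{p/2}\right)\right],
\end{align*}
and each quantity in the exponent is then bounded by a functional of $\nu$ alone: $\bar\gamma=\hat\gamma+\int_{|z|>1}z\,\hat\nu(dz)=\gamma+\int_{|z|>1}z\,\nu(dz)$ by the tail agreement; $\int z^2\,\hat\nu(dz)=\int z^2\,\nu(dz)$ by \eqref{pt.momentmatch} with $k=2$ (this is where $n\geq 3$ enters); and $\int|z|^p\,\hat\nu(dz)\leq\int_{|z|>1}|z|^p\,\nu(dz)+\int_{|z|\leq 1}|z|^2\,\nu(dz)$, obtained by splitting at the unit ball, using $|z|^p\leq|z|^2$ inside it, and invoking the tail agreement and the second-moment matching once more. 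Your proposal uses the structural hypotheses only to cancel generator terms, never to make the moment bound uniform, so the independence of $C$ from $\hat\nu$ --- the whole point of the statement --- is asserted rather than proved. The same uniform estimate is also what legitimizes taking expectations in It\^o's formula (the compensated-jump term must be a true martingale), so it cannot be skipped even for the identity you write down.
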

\begin{proof} To simplify notation, we give the proof in the case
  $m=d=1$.
Let $u(t,x) = E^{(t,x)} [f(X_1)]$. By Lemma 13 in
\cite{kohatsu.tankov.09}, $u\in C^{1,n}([0,1]\times \mathbb R)$ and
satisfies
\begin{align}
\hspace*{-2cm}\frac{\partial u}{\partial t}(t,x)  &  + \gamma\frac{\partial
u}{\partial x}(t,x)h(x) +\int_{|y|>1}\left(  u(t,x+h(x)y)-u(t,x)\right)  \nu\left(  dy\right)
 \nonumber \\ &+ \int_{|y|\leq1}\left(  u(t,x+h(x)y)-u(t,x)-\frac
{\partial u}{\partial x}(t,x)h(x)y\right)  \nu(dy) =0, \label{pt.u_eq}\\
u(1,x)  &  =f(x).\nonumber
\end{align}
Applying It\^o formula under the integral sign and using \eqref{pt.u_eq}
and Lemma 11 in \cite{kohatsu.tankov.09} (bounds on moments of $\hat
X_t$) yields 
\begin{align*}
&E[f(\hat X_1) - f(X_1)]  = E[u(1,\hat X_1) - u(0,X_0)] \\
& = E\left[\int_0^1 \int_{\mathbb R} \left\{u(t,\hat X_t + h(\hat X_t) z) -
  u(t,\hat X_t) - h(\hat X_t) z \frac{\partial u}{\partial
    x}\right\}(d\nu_\varepsilon - d\nu) dt \right] \\
&+ E\left[\int_0^1 \int_{\mathbb R} \left\{u(t,\hat X_{t-} + h(\hat X_{t-}) z) -
  u(t,\hat X_{t-})\right\}\hat N(dz, dt) \right]\\
&= E\left[\int_0^1 \int_{\mathbb R} \sum_{k=2}^{n-1} \frac{\partial^k
  u (t,\hat X_t)}{\partial x^k} h^k(\hat X_t) z^k (d\nu_\varepsilon -
d\nu) dt + \text{remainder}\right],\\
& = E[\text{remainder}],
\end{align*}
where in the last line we used the moment matching condition
\eqref{pt.momentmatch} and the remainder coming from the Taylor formula
can be estimated as
\begin{align*}
|\text{remainder}| & \leq \int_0^1 \int_{\mathbb R} \sup_{0\leq s \leq
1} \left| \frac{\partial^n
  u (s,\hat X_s)}{\partial x^n}\right|  |h(\hat X_t)|^n |z|^n |d\nu_\varepsilon -
d\nu| dt\\
& \leq C  \sup_{0\leq s \leq
1} \left| \frac{\partial^n
  u (s,\hat X_s)}{\partial x^n}\right| \sup_{0\leq s\leq 1} |h(\hat X_s)|^n \int_{\mathbb R} |z|^n |d\nu_\varepsilon -
d\nu| 
\end{align*}
From the Lipschitz property of $h$ and Lemma 13 in \cite{kohatsu.tankov.09},
$$
\sup_{0\leq s \leq
1} \left| \frac{\partial^n
  u (s,\hat X_s)}{\partial x^n}\right| \sup_{0\leq s\leq 1} |h(\hat
X_s)|^n \leq C(1+ \sup_{0\leq t \leq 1}|\hat X_t|^p)
$$
for some $C<\infty$, where $p=n$ under $\mathbf{(H_{n})}$ and $p>n$
under $\mathbf{(H'_{n})}$. Following the arguments in the
proof of Lemma 11 in \cite{kohatsu.tankov.09}, we get
$$
E[\sup_{0\leq t \leq 1} |\hat X_t|^p] \leq
C(1+|x|^p)\exp\left[c\left(|\bar \gamma|^p +\int_{\mathbb R} |z|^p
    \hat\nu(dz) + \left(\int_{\mathbb R}z^2 \hat\nu(dz)\right)^{p/2}\right)\right]
$$
for different constants $C$ and $c$, where 
$$
\bar \gamma = \hat \gamma + \int_{|z|>1} z \hat \nu(dz) = \gamma + \int_{|z|>1} z \nu(dz)
$$
by our assumptions. Since $\int_{\mathbb R}z^2 \hat\nu(dz) =
\int_{\mathbb R}z^2 \nu(dz)$ by assumption, and 
$$\int_{\mathbb R} |z|^p
    \hat\nu(dz) \leq \int_{|z|>1} |z|^p
    \hat\nu(dz) + \int_{|z|\leq 1} |z|^2 \hat\nu(dz) = \int_{|z|>1} |z|^p
    \nu(dz) + \int_{|z|\leq 1} |z|^2 \nu(dz),
$$
it is clear that $E[\sup_{0\leq t \leq 1} |\hat X_t|^p]\leq C$ for
some constant $C$ which does not depend on $\hat \nu$. 

\end{proof}

\section{The 3-moment scheme}
\label{pt.3mom}
Our first scheme is based on matching the first 3 moments of the
process $Z$. Let $S^{d-1}$ be the unit sphere in the $d$-dimensional
space, and $\nu(dr\times d\theta)$ be a L\'evy measure on $\mathbb R^d$
written in
spherical coordinates $r\in [0,\infty)$ and $\theta \in S^{d-1}$ and
satisfying $\int_{[0,\infty) \times S^{d-1}} r^3
\nu(dr,d\theta)<\infty$.  Denote by
$\bar \nu$ the reflection of $\nu$ with respect to the origin defined by $\bar\nu(B)
= \nu(\{x:-x \in B\})$.  We
introduce two measures on $S^{d-1}$:
\begin{align*}
\bar \lambda (d\theta) &= \frac{1}{2}\int_{|r|\leq \varepsilon}
\frac{r^3}{\varepsilon^3} \left(\nu(dr,d\theta) - \bar \nu(dr,d\theta)\right) \\
\lambda (d\theta) &= \frac{1}{2}\int_{|r|\leq \varepsilon} \frac{r^2}{\varepsilon^2} \left(\nu(dr,d\theta) + \bar\nu(dr,d\theta) \right).
\end{align*}
The \emph{3-moment scheme} is defined by 
\begin{align}
\nu_\varepsilon(dr, d\theta) &=  \nu(dr,d\theta) 1_{r>\varepsilon} +
\delta_\varepsilon(dr) \{\lambda(d\theta) + \bar \lambda (d\theta)\}\label{pt.3mom1}\\
\gamma_\varepsilon & = \gamma - \int_{[0,1]\times S^{d-1}} r\theta\,
\nu_\varepsilon(dr,d\theta), \label{pt.3mom2}
\end{align}
where $\delta_\varepsilon$ denotes a point mass at $\varepsilon$. 

\begin{proposition}[Multidimensional 3-moment scheme]\label{pt.3mom.prop}
For every $\varepsilon>0$, $\nu_\varepsilon$ is a finite positive measure satisfying
\begin{align}
\int_{\mathbb R^d} x_i x_j \nu(dx) &= \int_{\mathbb R^d} x_i x_j \nu_\varepsilon (dx) \label{pt.2ndmom}\\
 \int_{\mathbb R^d} x_i x_j x_k \nu(dx) &= \int_{\mathbb R^d} x_i x_j
 x_k \nu_\varepsilon (dx),\quad 1\leq i,j,k\leq d\\
\lambda_\varepsilon := \int_{\mathbb R^d} \nu_\varepsilon(dx) &= \int_{\|x\|>\varepsilon} \nu(dx) + \varepsilon^{-2}\int_{\|x\|\leq \varepsilon} \|x\|^2 \nu(dx)\\
\int_{\mathbb R^d} \|x\|^4 |d\nu - d\nu_\varepsilon| \leq  & \int_{\|x\|\leq \varepsilon} \|x\|^4 \nu(dx) + \varepsilon^2 \int_{\|x\|\leq \varepsilon} \|x\|^2 \nu(dx),
\end{align}
where the last inequality is an equality if $\nu(\{x: \|x\|=\varepsilon\}) = 0$.
\end{proposition}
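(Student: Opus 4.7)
The plan is to verify the four assertions in order, exploiting everywhere the symmetry arguments coming from the reflection $\bar\nu$. The key observation, to be used repeatedly, is that for any function $g(r,\theta)$ the substitution $\theta\mapsto-\theta$ gives $\int g(r,\theta)\bar\nu(dr,d\theta) = \int g(r,-\theta)\nu(dr,d\theta)$. In particular, $\int r^2\theta_i\theta_j(\nu-\bar\nu) = \int r^3\theta_i\theta_j\theta_k(\nu+\bar\nu) = 0$, while $\int r^2\theta_i\theta_j(\nu+\bar\nu) = 2\int r^2\theta_i\theta_j\nu$ and $\int r^3\theta_i\theta_j\theta_k(\nu-\bar\nu) = 2\int r^3\theta_i\theta_j\theta_k\nu$.

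To establish positivity (and hence that $\nu_\varepsilon$ is a well-defined finite positive measure), I would rewrite
\[
\lambda(d\theta)+\bar\lambda(d\theta) = \tfrac12\!\int_{r\le\varepsilon}\!\!\Bigl[\tfrac{r^2}{\varepsilon^2}\bigl(1+\tfrac{r}{\varepsilon}\bigr)\nu(dr,d\theta) + \tfrac{r^2}{\varepsilon^2}\bigl(1-\tfrac{r}{\varepsilon}\bigr)\bar\nu(dr,d\theta)\Bigr],
\]
and note that both coefficients $1\pm r/\varepsilon$ are nonnegative on $\{r\le\varepsilon\}$. Finiteness of the mass at $\varepsilon$ follows from $\int r^2\nu<\infty$ for small $r$, and finiteness of $\nu|_{r>\varepsilon}$ is automatic for any L\'evy measure.

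For the moment identities I would substitute the definitions of $\lambda,\bar\lambda$ into $\int x_ix_j\nu_\varepsilon$ and $\int x_ix_jx_k\nu_\varepsilon$, split each into the large-jump part ($r>\varepsilon$) and the atomic part at $r=\varepsilon$, and apply the parity identities above. For the second moment, the $\bar\lambda$ contribution vanishes by parity and the $\lambda$ contribution reproduces $\int_{r\le\varepsilon}r^2\theta_i\theta_j\nu$; for the third moment, it is the $\lambda$ contribution that vanishes and $\bar\lambda$ that reproduces $\int_{r\le\varepsilon}r^3\theta_i\theta_j\theta_k\nu$. The total-mass formula for $\lambda_\varepsilon$ is obtained in the same way: the integral of $\bar\lambda(d\theta)$ over $S^{d-1}$ is zero (because $r^3$ is even under reflection while the integrand carries the sign $\nu-\bar\nu$), leaving only $\int_{r\le\varepsilon}r^2\nu/\varepsilon^2$.

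Finally, for the total-variation bound I would decompose $\nu-\nu_\varepsilon$ on the three disjoint sets $\{r<\varepsilon\}$, $\{r=\varepsilon\}$, $\{r>\varepsilon\}$. On $\{r<\varepsilon\}$ the difference is $-\nu$, contributing $\int_{r<\varepsilon}\|x\|^4\nu$; on $\{r>\varepsilon\}$ it vanishes; on the sphere $\{r=\varepsilon\}$ we use $|\mu_1-\mu_2|\le\mu_1+\mu_2$ together with the positivity of $\lambda+\bar\lambda$ to bound the contribution by $\varepsilon^4\nu(\{r=\varepsilon\})+\varepsilon^4(\lambda+\bar\lambda)(S^{d-1})$, and the latter equals $\varepsilon^2\int_{r\le\varepsilon}r^2\nu$ by the already-established expression for $\lambda_\varepsilon-\nu(\|x\|>\varepsilon)$. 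Equality holds when $\nu(\{r=\varepsilon\})=0$ since then there is no cancellation at the sphere. The step I expect to be most delicate is the bookkeeping at $r=\varepsilon$ for the total variation, since one has to compare the singular part of $\nu$ supported on the sphere with the atom of $\nu_\varepsilon$ without overcounting.
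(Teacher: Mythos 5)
Your proposal is correct and follows essentially the same route as the paper: a direct computation in spherical coordinates where the reflection parity kills the $\bar\lambda$ (resp.\ $\lambda$) contribution for even (resp.\ odd) test functions, which is exactly how the paper verifies \eqref{pt.2ndmom}. In fact you supply more detail than the paper, which dismisses positivity as ``straightforward'' and the remaining identities as ``similar''; your explicit decomposition $\lambda+\bar\lambda=\tfrac12\int_{r\le\varepsilon}[\tfrac{r^2}{\varepsilon^2}(1+\tfrac{r}{\varepsilon})\,\nu+\tfrac{r^2}{\varepsilon^2}(1-\tfrac{r}{\varepsilon})\,\bar\nu]$ and the three-set bookkeeping $\{r<\varepsilon\}$, $\{r=\varepsilon\}$, $\{r>\varepsilon\}$ for the total-variation bound are precisely the omitted steps.
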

\begin{proof}
The positivity of $\nu_\varepsilon$ being straightforward, let us
check \eqref{pt.2ndmom}. Let $\{e_i\}_{i=1}^d$ be the coordinate
vectors. Then,
\begin{align*}
&\int_{\mathbb R^d} x_i x_j \nu_\varepsilon (dx) =
\int_{[0,\infty)\times S^{d-1}} r^2 \langle \theta,e_i \rangle \langle
\theta, e_j \rangle \nu_\varepsilon (dr, d\theta) \\
& = \int_{(\varepsilon,\infty)\times S^{d-1}} r^2 \langle \theta,e_i \rangle \langle
\theta, e_j \rangle \nu (dr, d\theta)  + \int_{S^{d-1}} \varepsilon^2 \langle \theta,e_i \rangle \langle
\theta, e_j \rangle \{\lambda (d\theta) + \bar \lambda (d\theta)\} \\
& = \int_{(\varepsilon,\infty)\times S^{d-1}} r^2 \langle \theta,e_i \rangle \langle
\theta, e_j \rangle \nu (dr, d\theta)  + \int_{S^{d-1}} \varepsilon^2 \langle \theta,e_i \rangle \langle
\theta, e_j \rangle \lambda (d\theta) \\
& = \int_{(0,\infty)\times S^{d-1}} r^2 \langle \theta,e_i \rangle \langle
\theta, e_j \rangle \nu (dr, d\theta) = \int_{\mathbb R^d} x_i x_j \nu (dx).
\end{align*}
The other equations can be checked in a similar manner. 
\end{proof}
\begin{corollary}
Let $d=1$. Then the 3-moment scheme can be written as 
\begin{align*}
\nu_\varepsilon(dx) &=  \nu(dx) 1_{|x|>\varepsilon} + \lambda_+
\delta_\varepsilon(dx) + \lambda_- \delta_{-\varepsilon}(dx)\\
\lambda_\pm &= \frac{1}{2}\left\{\int_{|x|\leq \varepsilon} \frac{x^2}{\varepsilon^2} \nu(dx) \pm \int_{|x|\leq \varepsilon} \frac{x^3}{\varepsilon^3} \nu(dx) \right\}
\end{align*}
\end{corollary}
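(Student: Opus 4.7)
The plan is to specialize formulas \eqref{pt.3mom1}–\eqref{pt.3mom2} to $d=1$. The unit sphere $S^0$ consists of the two points $\{+1,-1\}$, so any measure on $S^0$ is a pair of atoms, and the product $\delta_\varepsilon(dr)\{\lambda(d\theta)+\bar\lambda(d\theta)\}$ reduces to two point masses at $\pm\varepsilon$. The form $\nu_\varepsilon(dx) = \nu(dx)\mathbf{1}_{|x|>\varepsilon} + \lambda_+\delta_\varepsilon(dx) + \lambda_-\delta_{-\varepsilon}(dx)$ then follows automatically, and all that remains is to identify the two weights $\lambda_\pm = \lambda(\{\pm 1\}) + \bar\lambda(\{\pm 1\})$.

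First I would fix the dictionary between Cartesian and spherical coordinates on $\mathbb R$: a point $x\neq 0$ corresponds to $r = |x|$ and $\theta = \mathrm{sgn}(x)$, so under this pushforward $\nu(dr,\{+1\})$ is the image of $\nu|_{(0,\infty)}$ while $\nu(dr,\{-1\})$ is the image of $\nu|_{(-\infty,0)}$. The reflection $\bar\nu$ simply swaps the two hemispheres: $\bar\nu(dr,\{\pm 1\}) = \nu(dr,\{\mp 1\})$.

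Then I would evaluate the two spherical measures on each atom. For $\lambda$, the combination $\nu+\bar\nu$ places equal mass on both hemispheres, and the factor $r^2$ is insensitive to the sign of $x$, so $\lambda(\{+1\}) = \lambda(\{-1\}) = \tfrac{1}{2\varepsilon^2}\int_{|x|\leq\varepsilon} x^2\,\nu(dx)$. For $\bar\lambda$, the combination $\nu-\bar\nu$ is antisymmetric in $\theta$, so $\bar\lambda(\{+1\}) = -\bar\lambda(\{-1\})$. Summing these four quantities reproduces the $\lambda_\pm$ of the corollary.

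The only spot requiring care is the sign bookkeeping inside $\bar\lambda$: the integrand is $r^3 = |x|^3$, not $x^3$, so pushing the integral against $\nu(dr,\{-1\})$ back to the negative half-line produces $\int_{-\varepsilon}^0 (-x)^3 \nu(dx) = -\int_{-\varepsilon}^0 x^3\,\nu(dx)$. Combined with the minus sign in $\nu-\bar\nu$, this flips the $(-\infty,0)$ contribution so that both half-line integrals reassemble into the single clean expression $\bar\lambda(\{+1\}) = \tfrac{1}{2\varepsilon^3}\int_{|x|\leq\varepsilon} x^3\,\nu(dx)$, which is precisely the asymmetric correction appearing in $\lambda_\pm$.
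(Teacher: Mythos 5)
Your proposal is correct and is exactly the computation the paper leaves implicit: the corollary is stated without proof as the $d=1$ specialization of \eqref{pt.3mom1}--\eqref{pt.3mom2}, with $S^0=\{+1,-1\}$, $\lambda_\pm=\lambda(\{\pm 1\})+\bar\lambda(\{\pm 1\})$, and the sign flip from $r^3=|x|^3$ on the negative half-line handled precisely as you describe. Nothing is missing; your careful treatment of the antisymmetric $\bar\lambda$ term (a signed measure whose sum with $\lambda$ stays nonnegative since $|x|^3/\varepsilon^3\leq x^2/\varepsilon^2$ on $\{|x|\leq\varepsilon\}$) is the only point where one could go wrong.
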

\begin{corollary}[Worst-case convergence rate]
Assume $\mathbf{(H_{4})}$ or $\mathbf{(H^{\prime}_{4})}$. Then the
solution $\hat X$ of \eqref{pt.approxsol} with the characteristics of
$Z^\varepsilon$ given by \eqref{pt.3mom1}--\eqref{pt.3mom2} satisfies 
$$
|E[f(\hat X_1) - f(X_1)]| = o(\lambda_\varepsilon^{-1}). 
$$
as $\varepsilon \to 0$.
\end{corollary}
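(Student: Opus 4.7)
The plan is to chain together Propositions \ref{pt.basicbound} and \ref{pt.3mom.prop} and then verify that the resulting bound, multiplied by $\lambda_\varepsilon$, tends to zero. Since the 3-moment construction matches all second and third order moments (by Proposition \ref{pt.3mom.prop}), and since $(H_4)$ or $(H'_4)$ is in force, Proposition \ref{pt.basicbound} with $n=4$ gives
\[
|E[f(\hat X_1)-f(X_1)]|\leq C\int_{\mathbb R^d}\|x\|^4|d\nu-d\nu_\varepsilon|,
\]
and Proposition \ref{pt.3mom.prop} bounds the right-hand side by $I_4(\varepsilon)+\varepsilon^2 I_2(\varepsilon)$, where $I_k(\varepsilon):=\int_{\|x\|\leq\varepsilon}\|x\|^k\nu(dx)$. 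Using $\|x\|^4\leq \varepsilon^2\|x\|^2$ on $\{\|x\|\leq\varepsilon\}$, this collapses to $|E[f(\hat X_1)-f(X_1)]|\leq 2C\,\varepsilon^2 I_2(\varepsilon)$.

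Next I would multiply by $\lambda_\varepsilon=\int_{\|x\|>\varepsilon}\nu(dx)+\varepsilon^{-2}I_2(\varepsilon)$ and show that the product tends to zero, which by definition is exactly the $o(\lambda_\varepsilon^{-1})$ claim. Expanding gives
\[
\lambda_\varepsilon\cdot\varepsilon^2 I_2(\varepsilon)=\varepsilon^2 I_2(\varepsilon)\int_{\|x\|>\varepsilon}\nu(dx)+I_2(\varepsilon)^2.
\]
Both pieces go to zero as $\varepsilon\to 0$: the term $I_2(\varepsilon)^2$ vanishes since $I_2(\varepsilon)\to 0$ by dominated convergence (using $\int (1\wedge\|x\|^2)\nu(dx)<\infty$ and the fact that $\nu$ has no mass at the origin). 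For the first term, I would use the standard inequality $\int_{\varepsilon<\|x\|\leq 1}\nu(dx)\leq \varepsilon^{-2}\int_{\varepsilon<\|x\|\leq 1}\|x\|^2\nu(dx)$ to obtain
\[
\varepsilon^2\int_{\|x\|>\varepsilon}\nu(dx)\leq \int_{\varepsilon<\|x\|\leq 1}\|x\|^2\nu(dx)+\varepsilon^2\nu(\{\|x\|>1\}),
\]
which also tends to $0$, and combine this with $I_2(\varepsilon)\to 0$ to conclude.

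I do not expect any real obstacle here, as the machinery is already in place: the proof is essentially a bookkeeping exercise combining the two preceding propositions with elementary dominated-convergence arguments for truncated Lévy-measure integrals. The one mildly subtle point is insisting on the little-$o$ rather than big-$O$ conclusion, which is why I would explicitly invoke dominated convergence to get $I_2(\varepsilon)\to 0$ and $\varepsilon^2\int_{\|x\|>\varepsilon}\nu(dx)\to 0$ rather than merely bounding them by constants.
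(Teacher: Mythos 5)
Your proposal is correct and follows essentially the same route as the paper: invoke Proposition \ref{pt.basicbound} with $n=4$ together with Proposition \ref{pt.3mom.prop}, collapse the error bound to $2C\varepsilon^2 I_2(\varepsilon)$ via $\|x\|^4\leq\varepsilon^2\|x\|^2$ on $\{\|x\|\leq\varepsilon\}$, expand the product with $\lambda_\varepsilon$, and kill the two resulting terms. One intermediate claim is stated incorrectly, though it does not sink the proof: the right-hand side of your displayed inequality, $\int_{\varepsilon<\|x\|\leq 1}\|x\|^2\nu(dx)+\varepsilon^2\nu(\{\|x\|>1\})$, does \emph{not} tend to $0$, since $\int_{\varepsilon<\|x\|\leq 1}\|x\|^2\nu(dx)$ increases to the strictly positive constant $\int_{0<\|x\|\leq 1}\|x\|^2\nu(dx)$ (recall $\nu(\mathbb R^d)=\infty$, so this constant is not zero); what that inequality actually gives is only boundedness of $\varepsilon^2\int_{\|x\|>\varepsilon}\nu(dx)$. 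Boundedness suffices for your argument, because you multiply this factor by $I_2(\varepsilon)\to 0$, so the cross term still vanishes — but then the appeal to dominated convergence for this factor is the correct justification, not the truncation inequality: dominate $\varepsilon^2 1_{\{\|x\|>\varepsilon\}}$ by $1\wedge\|x\|^2$ (for $\varepsilon\leq 1$) and note the pointwise limit is $0$. This is exactly what the paper does, except in mirror image: the paper bounds $I_2(\varepsilon)$ by the constant $\int\|x\|^2\nu(dx)$ and sends $\varepsilon^2\int_{\|x\|>\varepsilon}\nu(dx)\to 0$ by dominated convergence, whereas you bound $\varepsilon^2\int_{\|x\|>\varepsilon}\nu(dx)$ by a constant and send $I_2(\varepsilon)\to 0$; either split is fine.
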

\begin{proof}
By Proposition \ref{pt.basicbound} we need to show that 
$$
\lim_{\varepsilon\downarrow 0} \lambda_{\varepsilon} \int_{\mathbb
  R^d} \|x\|^4 |d\nu - d\nu_\varepsilon| = 0.
$$
By Proposition \ref{pt.3mom.prop}, 
\begin{align*}
&\lim_{\varepsilon\downarrow 0} \lambda_{\varepsilon} \int_{\mathbb
  R^d} \|x\|^4 |d\nu - d\nu_\varepsilon| \\
&\leq  \lim_{\varepsilon\downarrow 0}\left\{\int_{\|x\|>\varepsilon}
  \nu(dx) + \varepsilon^{-2}\int_{\|x\|\leq \varepsilon} \|x\|^2
  \nu(dx)\right\} \\ &\qquad \qquad \times
\left\{ \int_{\|x\|\leq \varepsilon} \|x\|^4 \nu(dx) + \varepsilon^2
  \int_{\|x\|\leq \varepsilon} \|x\|^2 \nu(dx)\right\} \\
&\leq 2 \lim_{\varepsilon\downarrow 0}\varepsilon^2\left\{\int_{\|x\|>\varepsilon} \nu(dx) + \varepsilon^{-2}\int_{\|x\|\leq \varepsilon} \|x\|^2 \nu(dx)\right\} 
  \int_{\|x\|\leq \varepsilon} \|x\|^2 \nu(dx) \\
 &= 2 \lim_{\varepsilon\downarrow 0}\varepsilon^2\int_{\|x\|>\varepsilon} \nu(dx)  
  \int_{\|x\|\leq \varepsilon} \|x\|^2 \nu(dx) \\ &\leq 2  \int_{\mathbb
    R^d} \|x\|^2 \nu(dx) \lim_{\varepsilon\downarrow
    0}\varepsilon^2\int_{\|x\|>\varepsilon} \nu(dx)  = 0,
\end{align*}
where in the last line the dominated convergence theorem was used. 
\end{proof}

In many parametric or semiparametric models, the L\'evy measure has a
singularity of type
$\frac{1}{|x|^{1+\alpha}}$ near zero. This is the case for
stable processes, tempered stable processes \cite{rosinski.04}, normal
inverse Gaussian process \cite{bns_nig}, CGMY
\cite{finestructure} and other models. Stable-like behavior of small
jumps is a standard assumption for the analysis of asymptotic behavior
of L\'evy processes in many contexts, and in our problem as well, this
property allows to obtain a more precise estimate of the convergence rate. We shall impose the
following assumption, which does not require the L\'evy measure to have
a density:
\begin{description}
\item[\textbf{$\mathbf{(H-\alpha)}$}] There exist $C>0$ and $\alpha
  \in (0,2)$ such that\footnote{Throughout this paper we write $f\sim g$ if $\lim\frac{f(x)}{g(x)} = 1$ and
    $f\lesssim g$ if $\limsup\frac{f(x)}{g(x)} \leq 1$. }
\begin{align}
l(r)\sim C r^{-\alpha}\quad \text{as} \quad r\to 0\label{pt.stable.eq}
\end{align} 
where $l(r):= \int_{\|x\|>r} \nu(dx)$. 
\end{description}
\begin{corollary}[Stable-like behavior]
Assume $\mathbf{(H-\alpha)}$ and $\mathbf{(H_{4})}$ or $\mathbf{(H^{\prime}_{4})}$. 
Then the
solution $\hat X$ of \eqref{pt.approxsol} with the characteristics of
$Z^\varepsilon$ given by \eqref{pt.3mom1}--\eqref{pt.3mom2} satisfies 
$$
|E[f(\hat X_1) - f(X_1)]| = O\left(\lambda_\varepsilon^{1-\frac{4}{\alpha}}\right). 
$$
\end{corollary}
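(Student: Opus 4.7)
The plan is to combine Proposition \ref{pt.basicbound} with the explicit bound from Proposition \ref{pt.3mom.prop} and then evaluate the resulting integrals asymptotically using the stable-like hypothesis $\mathbf{(H-\alpha)}$. By the basic estimate (applicable under $\mathbf{(H_4)}$ or $\mathbf{(H'_4)}$), the weak error is dominated by $\int \|x\|^4|d\nu - d\nu_\varepsilon|$, which by Proposition \ref{pt.3mom.prop} is in turn bounded by $\int_{\|x\|\leq \varepsilon}\|x\|^4 \nu(dx) + \varepsilon^2 \int_{\|x\|\leq \varepsilon}\|x\|^2 \nu(dx)$. Everything then reduces to understanding how the truncated moments of $\nu$ and the intensity $\lambda_\varepsilon$ scale in $\varepsilon$.

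Next, I would translate $\int_{0<\|x\|\leq\varepsilon} \|x\|^k \nu(dx)$ into a one-dimensional integral against the tail function $l(r) = \int_{\|x\|>r}\nu(dx)$. Writing $\|x\|^k = \int_0^{\|x\|} k r^{k-1} dr$ and applying Fubini gives
\[
\int_{0<\|x\|\leq \varepsilon}\|x\|^k \nu(dx) = \int_0^\varepsilon k r^{k-1}\bigl(l(r)-l(\varepsilon)\bigr)\,dr.
\]
For $k>\alpha$, the hypothesis $l(r)\sim C r^{-\alpha}$ makes $r^{k-1}l(r)$ integrable at $0$, and a direct computation yields
\[
\int_{0<\|x\|\leq \varepsilon}\|x\|^k \nu(dx) \sim \frac{C\alpha}{k-\alpha}\,\varepsilon^{k-\alpha} \quad \text{as } \varepsilon \downarrow 0.
\]
Applying this for $k=2$ and $k=4$ gives $\int_{\|x\|\leq\varepsilon}\|x\|^4 \nu(dx) = O(\varepsilon^{4-\alpha})$ and $\varepsilon^2\int_{\|x\|\leq\varepsilon}\|x\|^2\nu(dx) = O(\varepsilon^{4-\alpha})$, so $\int \|x\|^4 |d\nu - d\nu_\varepsilon| = O(\varepsilon^{4-\alpha})$.

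Finally, I would relate $\varepsilon$ to $\lambda_\varepsilon$. From Proposition \ref{pt.3mom.prop},
\[
\lambda_\varepsilon = l(\varepsilon) + \varepsilon^{-2}\int_{\|x\|\leq\varepsilon}\|x\|^2 \nu(dx),
\]
and each term is of order $\varepsilon^{-\alpha}$ by $\mathbf{(H-\alpha)}$ and the asymptotic above. Hence $\lambda_\varepsilon \asymp \varepsilon^{-\alpha}$, so $\varepsilon^{4-\alpha} \asymp \lambda_\varepsilon^{-(4-\alpha)/\alpha} = \lambda_\varepsilon^{1 - 4/\alpha}$, yielding the claimed rate.

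The only delicate step is the asymptotic evaluation of the truncated moments: one needs to carefully handle the non-integrable singularity of $l$ at $0$ and confirm that the tail contribution $l(\varepsilon)\varepsilon^k$ does not cancel the leading behavior, but once written as the integral against $l(r)-l(\varepsilon)$ and combined with the exact constant $C\alpha/(k-\alpha)$, the argument is direct. The rest of the proof is essentially algebra: plugging the two asymptotics into the bound and inverting the relation $\lambda_\varepsilon \asymp \varepsilon^{-\alpha}$.
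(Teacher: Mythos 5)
Your proof is correct and takes essentially the same route as the paper: it combines Proposition \ref{pt.basicbound} with the bounds of Proposition \ref{pt.3mom.prop}, derives the truncated-moment asymptotics $\int_{\|x\|\leq r}\|x\|^k\nu(dx)\sim \frac{C\alpha}{k-\alpha}r^{k-\alpha}$ under $\mathbf{(H-\alpha)}$, and inverts $\lambda_\varepsilon \asymp \varepsilon^{-\alpha}$ to get the rate. Your Fubini argument against the tail function $l(r)$ is simply a spelled-out version of the ``integration by parts'' step that the paper invokes without detail, so the two proofs coincide in substance.
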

\begin{proof}
Under $\mathbf{(H-\alpha)}$, by integration parts we get that for all
$n\geq 2$, 
$$
\int_{\|x\|\leq r} \|x\|^n \nu(dx) \sim
\frac{C\alpha}{n-\alpha} r^{n-\alpha}\quad \text{as $r\to 0$.}
$$
Therefore, under this assumption,
$$
\lambda_\varepsilon \sim \frac{2C}{2-\alpha}
\varepsilon^{-\alpha}\quad \text{and}\quad \int_{\mathbb R^d} \|x\|^4
|d\nu - d\nu_\varepsilon| \lesssim
\frac{C\alpha(6-2\alpha)}{(2-\alpha)(4-\alpha)}
\varepsilon^{4-\alpha}\quad \text{as $\varepsilon\to 0$,}
$$
from which the result follows directly. 
\end{proof}

\paragraph{Rate-optimality of the 3-moment scheme}
From Proposition \ref{pt.basicbound} we know that under the assumption $\mathbf{(H_4)}$ or
$\mathbf{(H'_4)}$, the approximation error of a scheme of the form
\eqref{pt.Zeps}--\eqref{pt.approxsol} can be measured in terms of the $4$-th absolute moment
of the difference of L\'evy measures.
We introduce the class of L\'evy
measures on $\mathbb R^d$ with intensity bounded by $N$:
$$
\mathcal M^N = \{\nu \ \text{L\'evy measure on $\mathbb R^d$,}\  \nu(\mathbb R^d)\leq N\}.
$$
The class of L\'evy measures with intensity bounded by $\lambda_\varepsilon$ is then denoted by  $\mathcal
M^{\lambda_\varepsilon}$, and the smallest possible error achieved by
any measure within this class is bounded from below by a constant
times $\inf_{\nu' \in
  \mathcal M^{\lambda_\varepsilon}} \int_{\mathbb R^d} \|x\|^4 |d\nu -
d\nu'|$. The next result shows that as $\varepsilon\to 0$, the error
achieved by the 3-moment scheme $\nu_\varepsilon$ differs from this
lower bound by at most a constant multiplicative factor .

\begin{proposition}
Assume $\mathbf{(H-\alpha)}$ and let $\nu_\varepsilon$ be given by \eqref{pt.3mom1}. Then,
$$
\limsup_{\varepsilon \downarrow 0} \frac{\int_{\mathbb R^d} \|x\|^4 |d\nu - d\nu_\varepsilon|}{\inf_{\nu' \in \mathcal M^{\lambda_\varepsilon}} \int_{\mathbb R^d} \|x\|^4 |d\nu - d\nu'|} < \infty. 
$$
\end{proposition}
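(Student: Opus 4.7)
The numerator is already under control: the proof of the preceding \emph{Stable-like behavior} corollary shows that, under $\mathbf{(H-\alpha)}$,
\[
\int_{\mathbb R^d}\|x\|^4 |d\nu - d\nu_\varepsilon|
\;\lesssim\; K_1\,\varepsilon^{4-\alpha},\qquad
K_1 = \frac{C\alpha(6-2\alpha)}{(2-\alpha)(4-\alpha)}.
\]
So the whole task is to produce a \emph{matching lower bound} of order $\varepsilon^{4-\alpha}$ for the denominator, uniformly over $\nu'\in\mathcal M^{\lambda_\varepsilon}$. My plan is to test the variation integral against a well-chosen small ball centered at $0$: since $\nu$ puts an infinite amount of mass near the origin while $\nu'$ is constrained to have total mass at most $\lambda_\varepsilon$, no measure in $\mathcal M^{\lambda_\varepsilon}$ can mimic $\nu$ well there.

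First I would write the obvious triangle inequality: for any nonnegative Borel $f$ and any Borel set $A$,
\[
\int_A f\,|d\nu - d\nu'| \;\geq\; \int_A f\,d\nu \;-\; \int_A f\,d\nu'.
\]
Then I would specialize $f(x)=\|x\|^4$ and $A=\{\|x\|\le c\varepsilon\}$, where $c>0$ is a constant to be tuned. Using only $\nu'\ge 0$ and $\nu'(\mathbb R^d)\le \lambda_\varepsilon$, the second term is bounded crudely by $(c\varepsilon)^4\lambda_\varepsilon$, so
\[
\int_{\mathbb R^d}\|x\|^4|d\nu-d\nu'|
\;\ge\;
\int_{\|x\|\le c\varepsilon}\|x\|^4\,d\nu \;-\; c^4\varepsilon^4\lambda_\varepsilon.
\]

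Next I would plug in the stable-like asymptotics that were already used in the previous corollary: $\lambda_\varepsilon\sim \tfrac{2C}{2-\alpha}\varepsilon^{-\alpha}$ and $\int_{\|x\|\le c\varepsilon}\|x\|^4\,d\nu\sim \tfrac{C\alpha}{4-\alpha}(c\varepsilon)^{4-\alpha}$. The right-hand side becomes
\[
\Bigl(\frac{C\alpha}{4-\alpha}c^{4-\alpha} - \frac{2C}{2-\alpha}c^{4}\Bigr)\varepsilon^{4-\alpha}
\;=\;
C\,c^{4-\alpha}\Bigl(\frac{\alpha}{4-\alpha} - \frac{2c^{\alpha}}{2-\alpha}\Bigr)\varepsilon^{4-\alpha}.
\]
Choosing $c$ small enough, e.g.\ $c^{\alpha}<\tfrac{\alpha(2-\alpha)}{2(4-\alpha)}$, makes the bracket a strictly positive constant $K_2$ that depends only on $C$ and $\alpha$ and \emph{not} on $\nu'$. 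Taking the infimum over $\nu'\in\mathcal M^{\lambda_\varepsilon}$ gives the lower bound $K_2\varepsilon^{4-\alpha}$ for all sufficiently small $\varepsilon$, and combining with the upper bound yields $\limsup\le K_1/K_2<\infty$.

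The only genuinely delicate step is the tuning of $c$: one must exploit both the mass constraint on $\nu'$ and the stable-like accumulation of $\nu$ near zero \emph{simultaneously}, which is precisely why both the gained term $\int_{\|x\|\le c\varepsilon}\|x\|^4 d\nu$ and the lost term $c^4\varepsilon^4\lambda_\varepsilon$ are of the same order $\varepsilon^{4-\alpha}$. The fact that their ratio is $\tfrac{\alpha(2-\alpha)}{2(4-\alpha)}c^{-\alpha}$, which exceeds $1$ for sufficiently small $c$, is what makes the argument work; any polynomial weight other than $\|x\|^n$ with $n<2+\alpha$ would fail to give a useful lower bound by this method, but $n=4>2$ is precisely the exponent dictated by Proposition~\ref{pt.basicbound}, so the estimate is tight in the right regime.
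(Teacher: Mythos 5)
Your proof is correct, but it takes a genuinely different route from the paper's. The paper actually \emph{solves} the variational problem $\inf_{\nu'\in\mathcal M^{N}}\int_{\mathbb R^d}\|x\|^4|d\nu-d\nu'|$: it first reduces to measures absolutely continuous with respect to $\nu$ with density in $[0,1]$, symmetrizes over the sphere, and identifies the minimizer as the truncation of $\nu$ outside a ball of radius $e(N)$ chosen to saturate the mass constraint; then, under $\mathbf{(H-\alpha)}$, it computes the \emph{exact} limit of the ratio, namely $(3-\alpha)\left(\frac{2}{2-\alpha}\right)^{4/\alpha}$. You never solve the optimization: you lower-bound the denominator uniformly over all admissible $\nu'$ by restricting the variation to the ball $\{\|x\|\le c\varepsilon\}$ and using only $\nu'\ge 0$ and $\nu'(\mathbb R^d)\le\lambda_\varepsilon$, then tune $c$ so that the gained term $\int_{\|x\|\le c\varepsilon}\|x\|^4\,d\nu\sim\frac{C\alpha}{4-\alpha}(c\varepsilon)^{4-\alpha}$ strictly dominates the lost term $c^4\varepsilon^4\lambda_\varepsilon$. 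This is in effect a weak-duality argument: your penalty weight $(c\varepsilon)^4$ plays exactly the role of the paper's Lagrange multiplier $e^4$, except that you do not need attainment. What your route buys is brevity and elementarity (no decomposition into absolutely continuous and singular parts, no spherical averaging, no identification of the optimizer); what it gives up is sharpness --- your bound $K_1/K_2$ depends on the arbitrary cutoff $c$, whereas the paper's exact constant is what drives the subsequent Remark about the blowup of the ratio as $\alpha\to 2$. Both arguments correctly establish the stated finiteness.

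One inaccuracy in your closing aside, immaterial to the proof itself: the claim that the method would fail for weights $\|x\|^n$ unless $n<2+\alpha$ is wrong. For a general exponent $n>\alpha$ the gained-to-lost ratio is $\frac{\alpha(2-\alpha)}{2(n-\alpha)}c^{-\alpha}$, which exceeds $1$ for $c$ small enough \emph{whatever} the value of $n$, so the same testing argument works for every moment exponent for which the integrals converge.
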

\begin{proof}
\noindent \textit{Step 1.} Let us first compute
\begin{align}
\mathcal E_N := \inf_{\nu' \in \mathcal M^{N}} \int_{\mathbb R^d} \|x\|^4 |d\nu - d\nu'|.\label{pt.optnu}
\end{align}
For $\nu' \in \mathcal M^{N}$, let $\nu' = \nu'_c + \nu'_s$ where
$\nu'_c$ is absolutely continuous with respect to $\nu$ and $\nu'_s$
is singular. Then $\nu'_c(\mathbb R^d)\leq N$ and 
$$
\int_{\mathbb R^d} \|x\|^4 |d\nu - d\nu'| = \int_{\mathbb R^d} \|x\|^4
|d\nu - d\nu'_c| + \int_{\mathbb R^d} \|x\|^4 d\nu'_s.
$$
Therefore, the minimization in \eqref{pt.optnu} can be restricted to
measures $\nu'$ which are absolutely continuous with respect to $\nu$,
or, in other words,
\begin{align*}
\mathcal E_N = \inf \int_{\mathbb R^d} \|x\|^4 |1-\lambda(x)| \nu(dx),
\end{align*}
where the $\inf$ is taken over all measurable functions
$\lambda:\mathbb R^d \to \mathbb R^+$ such that $\int_{\mathbb R^d}
\lambda(x) \nu(dx)\leq N$. By a similar argument, one can show that
it is sufficient to consider only functions $\lambda:\mathbb R^d \to
[0,1]$. Given such a function $\lambda(r,\theta)$, the spherically
symmetric function 
$$
\hat \lambda(r):= \frac{\int_{S^{d-1}} \lambda(r,\theta)\nu(dr,d\theta)}{\int_{S^{d-1}} \nu(dr,d\theta)}
$$
leads to the same values of the intensity and the minimization
functional. Therefore, letting $\hat \nu(dr):=\int_{S^{d-1}}
\nu(dr,d\theta)$,
\begin{align}
\mathcal E_N = \inf_{0\leq \hat \lambda\leq 1} \int_{0}^\infty r^4 (1-\hat\lambda(r))
\hat\nu(dr)\quad \text{s.~t.}\quad \int_0^\infty
\hat\lambda(r)\hat \nu(dr)\leq N. \label{pt.lambdahat}
\end{align}
For every $e>0$,
$$
\mathcal E_N \geq \inf_{0\leq \hat \lambda  \leq 1} \left\{\int_{0}^\infty r^4 (1-\hat\lambda(r))
\hat\nu(dr) + e^4 \left(\int_0^\infty
\hat\lambda(r)\hat \nu(dr)- N\right)\right\}.
$$
The $\inf$ in the right-hand side can be computed pointwise and is
attained by $\hat \lambda(r) = 1_{r>e} + \mu
1_{r=e}$ for any $\mu\in[0,1]$. Let $e(N)$ and
$\mu(N)$ be such that 
$$\hat \nu((e(N),\infty)) + \mu(N)
\hat \nu(\{e(N)\}) = N.$$ 
Such a $e(N)$ can always be
determined uniquely and $\mu(N)$ is determined uniquely if
$\nu(\{e(N)\})>0$. It follows that $\hat \lambda(r) = 1_{r>e(N)} + \mu(N)
1_{r=e(N)}$ is a minimizer for \eqref{pt.lambdahat} and therefore
$$
\mathcal E_N = \int_{\|x\|<e} \|x\|^4 \nu(dx) +
(1-\mu) e^4 \nu(\{x:\|x\|=e\}),
$$
where $e$ and $\mu$ are solutions of 
$$
\nu(\{x:\|x\|>e\}) + \mu\nu(\{x:\|x\|=e\}) = N.
$$

\noindent \textit{Step 2.} For every $\varepsilon>0$, let
$e(\varepsilon)$ and $\mu(\varepsilon)$ be solutions of 
$$
\nu(\{x:\|x\|>e(\varepsilon)\}) + \mu(\varepsilon)\nu(\{x:\|x\|=e(\varepsilon)\}) = \lambda_\varepsilon.
$$
It is clear that $e(\varepsilon) \to 0$ as $\varepsilon \to 0$ and
after some straightforward computations using the assumption
$\mathbf{(H-\alpha)}$ we get that
$$
\lim_{\varepsilon \to 0} \frac{e(\varepsilon)}{\varepsilon} = \left(\frac{2-\alpha}{2}\right)^{1/\alpha}.
$$
Then,
\begin{align*}
&\lim_{\varepsilon \downarrow 0} \frac{\int_{\mathbb R^d} \|x\|^4
  |d\nu - d\nu_\varepsilon|}{\mathcal E_{\lambda_\varepsilon}}  = \lim_{\varepsilon \downarrow 0} \frac{\int_{\mathbb R^d} \|x\|^4
  |d\nu - d\nu_\varepsilon|}{\varepsilon^{4-\alpha}} \lim_{\varepsilon\downarrow 0}
\frac{\varepsilon^{4-\alpha}}{e(\varepsilon)^{4-\alpha}} \\
&\qquad\times\lim_{\varepsilon\downarrow 0}\frac{e(\varepsilon)^{4-\alpha}}{\int_{\|x\|<e(\varepsilon)} \|x\|^4 \nu(dx) +
(1-\mu(\varepsilon)) e(\varepsilon)^4 \nu(\{x:\|x\|=e(\varepsilon)\})}
\end{align*}
Under $\mathbf{(H-\alpha)}$ the three limits are easily computed and
we finally get
\begin{align}
\lim_{\varepsilon \downarrow 0} \frac{\int_{\mathbb R^d} \|x\|^4
  |d\nu - d\nu_\varepsilon|}{\mathcal E_{\lambda_\varepsilon}}  =
(3-\alpha) \left(\frac{2}{2-\alpha}\right)^{4/\alpha}. \label{pt.constant}
\end{align}

\end{proof}
\begin{remark}
The constant $(3-\alpha) \left(\frac{2}{2-\alpha}\right)^{4/\alpha}>1$
appearing in the right-hand side of \eqref{pt.constant} cannot be
interpreted as a ``measure of suboptimality'' of the 3-moment scheme,
but only as a rough upper bound, because in the optimization problem
\eqref{pt.optnu} the moment-matching constraints were not imposed (if
they were, it would not be possible to solve the problem
explicitly). On the other hand, the fact that this constant is
unbounded as $\alpha \to 2$ suggests that such a rate-optimality
result cannot be shown for general L\'evy measures without imposing the
assumption $\mathbf{(H-\alpha)}$.
\end{remark}

\paragraph{Numerical illustration}
We shall now illustrate the theoretical results on a concrete example
of a SDE driven by a normal inverse Gaussian (NIG) process
\cite{bns_nig}, whose characteristic function is
$$
\phi_t(u):=E[e^{iuZ_t}] = \exp\left\{-\delta t \left(\sqrt{\alpha^2 - (\beta-iu)^2} - \sqrt{\alpha^2 - \beta^2}\right)\right\},
$$
where $\alpha>0$, $\beta \in (-\alpha,\alpha)$ and $\delta>0$ are parameters. The L\'evy density is given by
$$
\nu(x) = \frac{\delta \alpha}{\pi} \frac{e^{\beta x} K_1(\alpha|x|)}{|x|},
$$
where $K$ is the modified Bessel function of the second kind. The NIG
process has stable-like behavior of small jumps with $\nu(x)\sim
\frac{const}{|x|^2}$, $x\to 0$ (which means that
$\mathbf{(H-\alpha)}$ is satisfied with $\alpha=1$), and exponential
tail decay.
The increments of the NIG process can be simulated explicitly (see \cite[algorithms 6.9 and 6.10]{levybook}), which enables us to compare our jump-adapted algorithm with the classical Euler scheme.

For the numerical example we solve the one-dimensional SDE 
$$
dX_t = \sin(a X_t) dZ_t,
$$ 
where $Z$ is the NIG L\'evy process (with drift adjusted to have $E[Z_t]=0$). 
The solution of the corresponding deterministic ODE 
$$
d X_t = \sin(a X_t) dt, \quad X_0 = x
$$
is given explicitly by
$$
X_t = \theta(t;x) = \frac{1}{a} \arccos \frac{1+\cos(a x) - e^{2a t}(1-\cos(ax))}{1+\cos(a x) + e^{2a t}(1-\cos(ax))}
$$

Figure \ref{pt.nig.fig} presents the approximation errors for evaluating
the functional $E[(X_1-1)^+]$ by Monte-Carlo using the 3-moment scheme
described in this section (marked with crosses), the diffusion
approximation of \cite{kohatsu.tankov.09} (circles) and the classical Euler
scheme (diamonds).  The parameter values are $\sigma=0.5$, $\theta=0.4$,
$\kappa=0.6$, $a=5$ and $X_0=1$. 
For each scheme we plot the logarithm of the approximation error as
function of the logarithm of the computational cost (time needed to
simulate $10^6$ trajectories). The curves are
obtained by varying the truncation parameter $\varepsilon$ for the two
jump-adapted schemes and by varying the discretization time step for
the Euler scheme.

The
approximation error for the Euler scheme is a straight line with slope
corresponding to the theoretical convergence rate of
$\frac{1}{n}$. The graph for the 3-moment scheme seems to confirm the
theoretical convergence rate of $\lambda_\varepsilon^{-3}$; the scheme
is much faster than the other two and the corresponding curve quickly
drops below the dotted line which symbolizes the level of the
statistical error. 

\begin{figure}
\centerline{\includegraphics[width=0.7\textwidth]{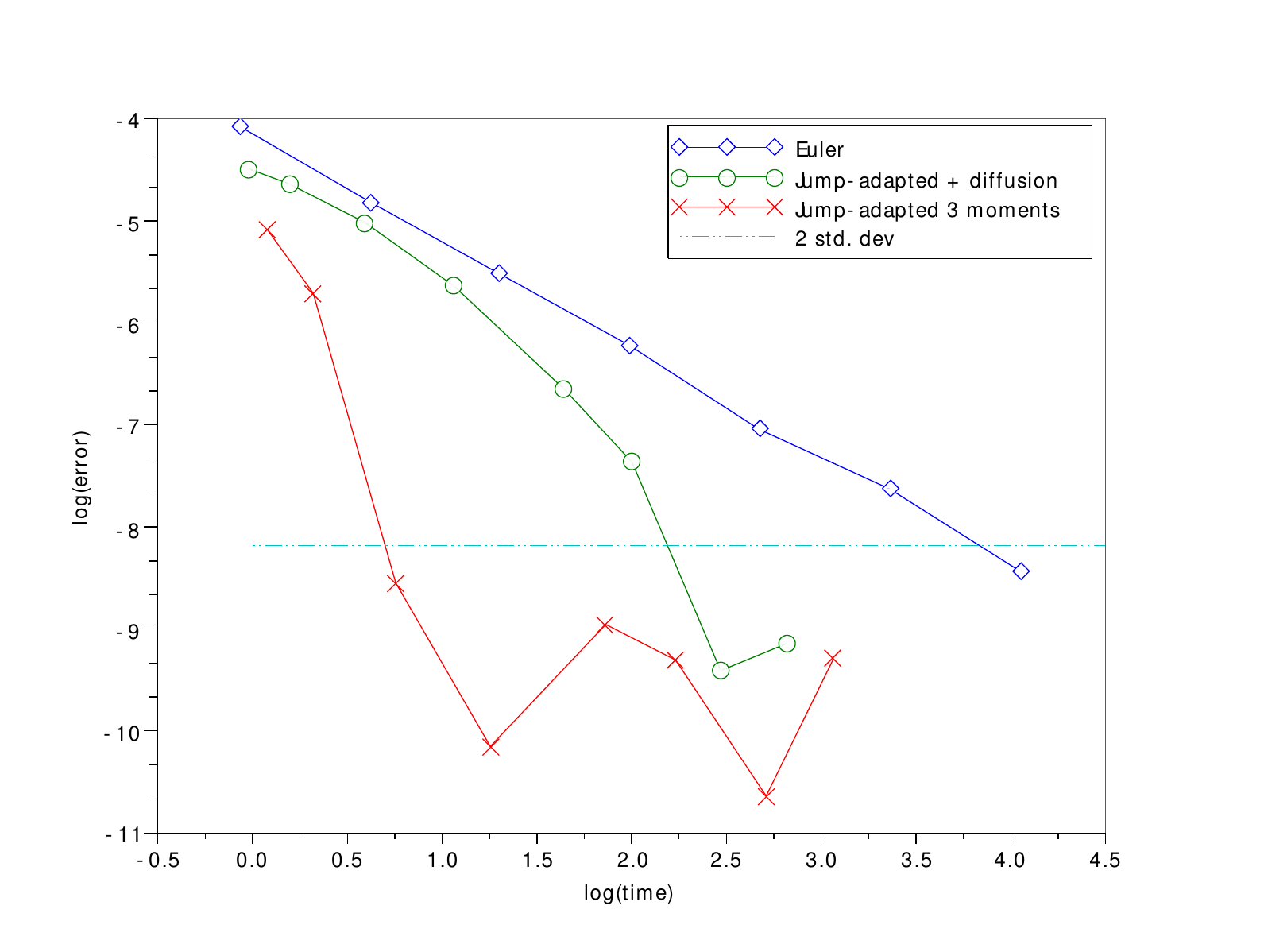}}
\caption{Approximation errors for the 3-moment scheme (crosses), the
  scheme of \cite{kohatsu.tankov.09} (circles) and the Euler scheme
  (diamonds). The horizontal dotted line corresponds to the logarithm
  of the two standard deviations of the Monte Carlo estimator (the
  standard deviation is about the same for the three schemes and
  independent of the discretization step); everything that is below
  the dotted line is Monte Carlo noise.  }
\label{pt.nig.fig}
\end{figure}

\section{High order schemes for stable-like L\'evy processes}
\label{pt.highorder}
In this section, we develop schemes of arbitrary order for L\'evy
processes with stable-like behavior of small jumps. Throughout this
section, we take $d=1$ and let $Z$ be a L\'evy process with
characteristic triplet $(0,\nu,\gamma)$ satisfying the
following refined version of $\mathbf{(H-\alpha)}$:
\begin{description}
\item[$\mathbf{(H'-\alpha)}$] There exist, $c_+\geq 0$, $c_-\geq 0$ with
  $c_+ + c_- >0$ and $\alpha \in (0,2)$ such that 
$$
\int_\varepsilon^\infty \nu(dx) \sim c_+ \varepsilon^{-\alpha}\quad
\text{and}\quad \int_{-\infty}^{-\varepsilon} \nu(dx) \sim c_-
\varepsilon^{-\alpha}\quad \text{as}\quad \varepsilon \downarrow 0
$$
\end{description}

Introduce the probability measure 
\begin{align}
\mu^*(x) := \frac{
  (2-\alpha) |x|^{1-\alpha} (c_+ 1_{0 \leq x\leq 1} + c_- 1_{-1 \leq x\leq 0})}{c_++c_-}.\label{pt.mustar}
\end{align}
Let $n\geq 0$ and $\varepsilon>0$. The high-order scheme for the stochastic differential equation
\eqref{pt.sde} based on $n+2$ moments and truncation level $\varepsilon$ is constructed as follows:
\begin{enumerate}
\item \label{pt.step1} Find a discrete probability measure 
$\bar \mu = \sum_{i=0}^n a^*_i \delta_{x_i}$ with 
\begin{align}
\int_\mathbb R x^k \bar \mu(dx) = \int_\mathbb R x^k  \mu^*(dx),\quad 1\leq k \leq n,\label{pt.momentmatch}
\end{align}
such that $x_0<x_1 < \dots < x_n$, $x_i \neq
  0$ for all $i$ and $a^*_i>0$ for all $i$.
\item Compute the coefficients $\{a^\varepsilon_i\}$ by solving the linear system
$$
\hspace*{-1cm}\sigma^2_\varepsilon \sum_{i=0}^n a^\varepsilon_i x_i^k \varepsilon^k
= \int_{|x|\leq \varepsilon} x^{2+k}\nu(dx),\ k=0,\dots, n,\
\sigma_\varepsilon^2 = \int_{|x|\leq \varepsilon} x^{2}\nu(dx).
$$
\item The high-order scheme is defined by 
\begin{align}
\nu_\varepsilon(dx) = \nu(dx)1_{|x|>\varepsilon} +
\sigma^2_\varepsilon\sum_{i=0}^n \frac{ a^\varepsilon_i
  \delta_{\varepsilon x_i} (dx)}{x_i^2 \varepsilon^2},\qquad
\gamma_\varepsilon = \gamma - \int_{|z|\leq 1}
z\nu_\varepsilon(dz).\label{pt.ho2}
\end{align}
\end{enumerate}
\begin{remark}
The first step in implementing the scheme is to solve the
moment-matching problem \eqref{pt.momentmatch} for measure $\mu^*$. The
existence of at least one solution to this problem with $x_0<x_1 <\dots x_n$ and
$a^*_i \geq 0$ for all $i$ is guaranteed by the classical
Caratheodory's theorem, but this problem admits, in general, an infinite number of solutions. Here we impose the additional condition $x_i \neq 0$ and
$a^*_i >0$ for all $i$, which should be checked on a case by case
basis in concrete realizations of the scheme (see Example \ref{pt.momentex}). 
\end{remark}
\begin{remark}
It is easy to see that the measure 
$$
\nu^*_\varepsilon(dx):=(\sigma^*_\varepsilon)^2\sum_{i=0}^n \frac{ a^*_i \delta_{\varepsilon
    x_i} (dx)}{x_i^2 \varepsilon^2},\quad  (\sigma^*_\varepsilon)^2:= \int_{|x|\leq \varepsilon}x^2 \nu^*(z)dz.
$$
matches the moments of orders $2,\dots,n+2$ of $\nu^*(x) 1_{|x|\leq
  \varepsilon}$, where $\nu^*$ is the measure given by
$$
\nu^*(x)=\frac{\alpha c_+1_{x>0} + \alpha c_-
  1_{x<0}}{|x|^{1+\alpha}},\quad
$$
that is, $\nu^*$ satisfies the assumption $\mathbf{(H'-\alpha)}$ with
equalities instead of equivalences. The idea of the method is to
replace the coefficients $\{a^*_i\}$ with a different set of
coefficients while keeping the same points $\{x_i\}$ to obtain a
measure which matches the moments of $\nu(x) 1_{|x|\leq
  \varepsilon}$. Therefore, the points $\{x_i\}$ do not depend on
the truncation parameter $\varepsilon$ while the coefficients
$\{a^\varepsilon_i\}$ depend on it. 
\end{remark}

\begin{example}\label{pt.momentex}
As an example we provide a possible solution of the moment matching
problem for $n=3$, which leads to
a 5-moment scheme (matching 3 moments of $\mu^*$ or $5$ moments of the
L\'evy process). We assume that $\mu^*$ has mass both on the positive
and the negative half-line: $c_+c_- >0$.   

The moments of $\mu^*$ are given by 
$$
m_k = \frac{2-\alpha}{k+2-\alpha}(\rho + (-1)^k (1-\rho)),\quad \rho:=
\frac{c_+}{c_++c_-}. 
$$
It is then convenient to look for the discrete measure matching the
first 3 moments of $\mu^*$ in the form
\begin{align}
\bar \mu = (1-\rho) (p\delta_{-\varepsilon_2} +
(1-p)\delta_{-\varepsilon_1})+\rho ((1-p)\delta_{\varepsilon_1} +
p\delta_{\varepsilon_2} ),\label{pt.mubar3}
\end{align}
where $p\in (0,1)$, $0<\varepsilon_1<\varepsilon_2$ are
parameters to be identified from the moment conditions
$$
(1-p)\varepsilon_1^k + p\varepsilon_2^k = \frac{2-\alpha}{k+2-\alpha},
\quad k=1,2,3.
$$
For the purpose of solving this system of equations, let $\mathcal E$
be a random variable such that $P[\mathcal E = \varepsilon_2] = p = 1-
P[\mathcal E = \varepsilon_1]$. From the moment conditions, we get:
\begin{align}
\bar \varepsilon&:= E[\mathcal E] = \frac{2-\alpha}{3-\alpha}\label{pt.epsbar3},\qquad
\sigma^2 := \text{Var}\, \mathcal E =
\frac{(2-\alpha)}{(4-\alpha)(3-\alpha)^2},\\
s &:= \frac{E[(\mathcal E - E[\mathcal E])^3]}{\sigma^3} =
2\frac{\alpha-1}{5-\alpha}\sqrt{\frac{4-\alpha}{2-\alpha}}.  \label{pt.skew3}
\end{align}
On the other hand, the skewness $s$ can be directly linked to the
weight $p$:
\begin{align}
s = \frac{1-2p}{\sqrt{p(1-p)}}\quad \Rightarrow \quad p = \frac{1}{2}
- \frac{1}{2}\text{sign}\, (s)\sqrt{\frac{s^2}{s^2 + 4}},\label{pt.prob3}
\end{align}
and the parameters $\varepsilon_1$ and $\varepsilon_2$ can be linked
to $\bar \varepsilon, \sigma$ and $p$:
\begin{align}
\varepsilon_{1} = \bar \varepsilon - \sigma \sqrt{\frac{p}{1-p}},\quad
\varepsilon_{2} = \bar \varepsilon  + \sigma \sqrt{\frac{1-p}{p}}.\label{pt.eps123}
\end{align}
The dependence of $\varepsilon_1$, $\varepsilon_2$
and $p$ on $\alpha$ is shown in Figure \ref{pt.moment.fig}: it is clear
from the graph that the constraints $p\in (0,1)$ and $0<\varepsilon_1
< \varepsilon_2$ are satisfied for all $\alpha \in (0,2)$: therefore, equations (\ref{pt.mubar3}--\ref{pt.eps123}) define a $4$-atom probability measure which matches the first 3
moments of $\mu^*$.
\begin{figure}

\centerline{\includegraphics[width=0.7\textwidth]{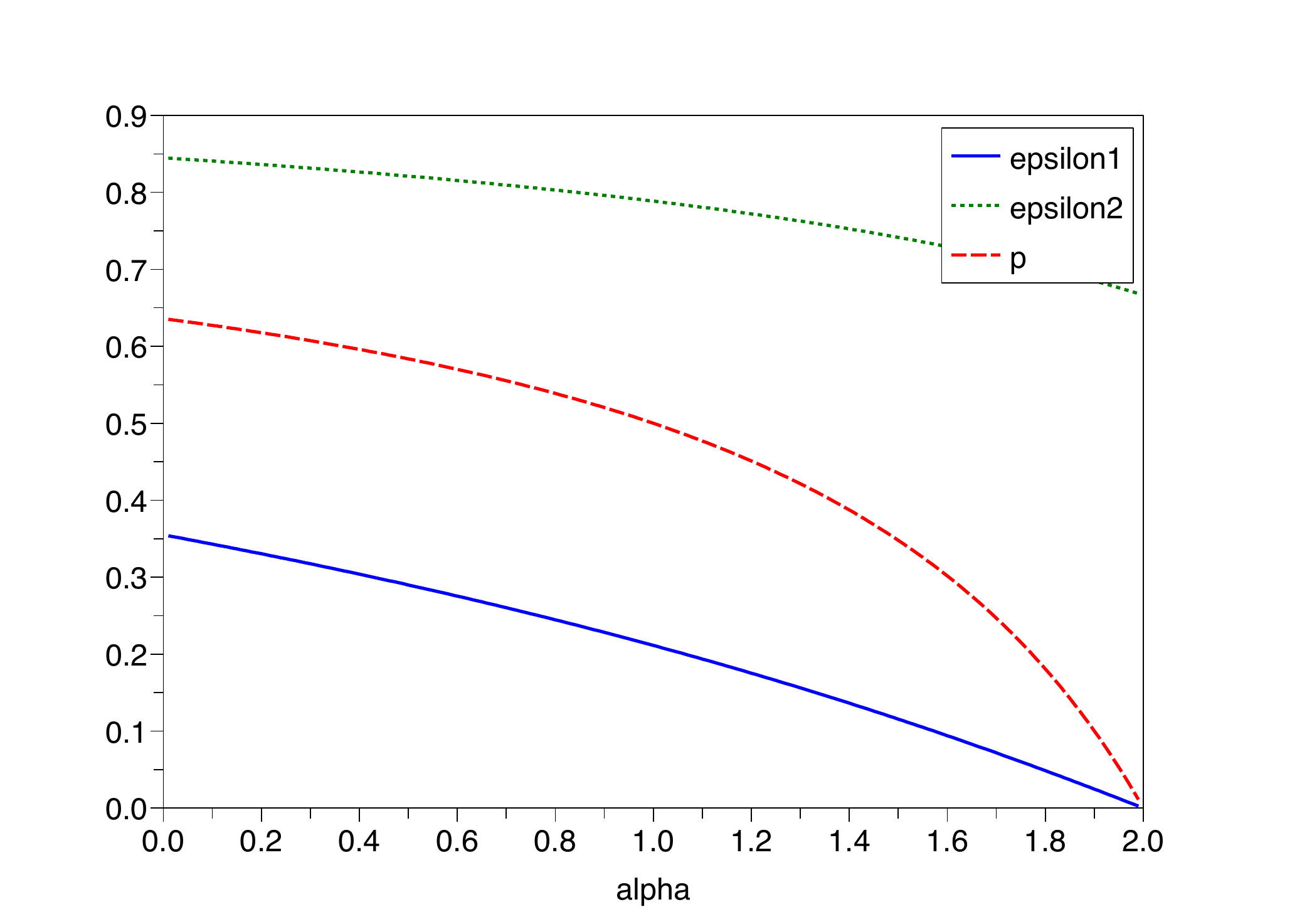}}
\caption{Solution of the moment matching problem for 3 moments (see
  Example \ref{pt.momentex}).}
\label{pt.moment.fig}
\end{figure}
\end{example}

\begin{proposition} Let $\{x_i\}$ be fixed according to \eqref{pt.momentmatch}. 
There exists $\varepsilon_0 >0$ such that for all
$\varepsilon<\varepsilon_0$, $\nu_\varepsilon$ is a positive measure satisfying
\begin{align}
&\int_{\mathbb R} x^k \nu(dx) = \int_{\mathbb R} x^k
\nu_{\varepsilon}(dx),\quad 2\leq k \leq n+2 \label{pt.ho.momc}
\end{align}
There exist positive constants $C_1$ and $C_2$ such that
$$
\lambda_\varepsilon = \nu_\varepsilon(\mathbb R)\sim C_1
\varepsilon^{-\alpha},\qquad \int_{\mathbb R} |x|^{n+3} |d\nu
  - d\nu_\varepsilon| \lesssim C_2 \varepsilon^{n+3-\alpha}\quad
  \text{as $\varepsilon\to 0$}. 
$$
\end{proposition}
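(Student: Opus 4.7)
The statement has three components: the moment identities \eqref{pt.ho.momc}, the positivity of $\nu_\varepsilon$ for small $\varepsilon$, and the asymptotics of $\lambda_\varepsilon$ and of the weighted variational distance. The moment identities are essentially built into the construction: since $\nu_\varepsilon$ and $\nu$ coincide on $\{|x|>\varepsilon\}$, for each $k=0,\dots,n$ one has
$$\int_{\mathbb R} x^{k+2}\,d\nu_\varepsilon - \int_{\mathbb R} x^{k+2}\,d\nu = \sigma_\varepsilon^2\varepsilon^k\sum_{i=0}^n a_i^\varepsilon x_i^k - \int_{|x|\leq\varepsilon} x^{k+2}\,\nu(dx),$$
which vanishes by the very definition of the coefficients $a_i^\varepsilon$ in step 2 of the scheme.

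For positivity I would show that $a_i^\varepsilon\to a_i^*>0$ as $\varepsilon\downarrow 0$, so that the hypothesis $a_i^*>0$ from step 1 forces $a_i^\varepsilon>0$ once $\varepsilon$ is small. Rewriting the linear system as $\sum_i a_i^\varepsilon x_i^k = R_k(\varepsilon)$ with $R_k(\varepsilon) := \sigma_\varepsilon^{-2}\varepsilon^{-k}\int_{|x|\leq\varepsilon} x^{k+2}\,\nu(dx)$, the coefficient matrix $(x_i^k)_{i,k}$ is Vandermonde and hence invertible, so $a_i^\varepsilon$ depends continuously on $(R_0(\varepsilon),\dots,R_n(\varepsilon))$. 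Using $\mathbf{(H'-\alpha)}$ and integration by parts one obtains the Karamata-type asymptotics
$$\int_{|x|\leq\varepsilon} x^{k+2}\,\nu(dx)\sim \frac{\alpha(c_+ +(-1)^k c_-)}{k+2-\alpha}\,\varepsilon^{k+2-\alpha},\qquad \sigma_\varepsilon^2\sim \frac{\alpha(c_++c_-)}{2-\alpha}\,\varepsilon^{2-\alpha},$$
whence $R_k(\varepsilon)\to \frac{(2-\alpha)(c_+ +(-1)^k c_-)}{(k+2-\alpha)(c_++c_-)}$, which is precisely the $k$-th moment $m_k$ of the probability measure $\mu^*$ from \eqref{pt.mustar}. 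By uniqueness of the Vandermonde system, $a_i^\varepsilon\to a_i^*$.

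With $a_i^\varepsilon\to a_i^*$ in hand, both asymptotic statements reduce to bookkeeping. For the intensity,
$$\lambda_\varepsilon = \int_{|x|>\varepsilon}\nu(dx) + \frac{\sigma_\varepsilon^2}{\varepsilon^2}\sum_i \frac{a_i^\varepsilon}{x_i^2}$$
has first term $\sim (c_++c_-)\varepsilon^{-\alpha}$ from $\mathbf{(H'-\alpha)}$ and second term $\sim \frac{\alpha(c_++c_-)}{2-\alpha}\sum_i \frac{a_i^*}{x_i^2}\varepsilon^{-\alpha}$, so $\lambda_\varepsilon\sim C_1\varepsilon^{-\alpha}$ with an explicit $C_1$. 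For the error, the supports of the continuous part $\nu|_{|x|\leq\varepsilon}$ and of the atomic part $\sum_i \sigma_\varepsilon^2 a_i^\varepsilon x_i^{-2}\varepsilon^{-2}\delta_{\varepsilon x_i}$ are disjoint for small $\varepsilon$ (since the $x_i$ are nonzero), so
$$\int_{\mathbb R} |x|^{n+3}|d\nu-d\nu_\varepsilon| = \int_{|x|\leq\varepsilon}|x|^{n+3}\,\nu(dx) + \sigma_\varepsilon^2\varepsilon^{n+1}\sum_i a_i^\varepsilon |x_i|^{n+1},$$
and the same Karamata asymptotics give both summands of order $\varepsilon^{n+3-\alpha}$, so $\lesssim C_2\varepsilon^{n+3-\alpha}$ holds with $C_2$ the sum of the two explicit leading constants.

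The key step is the identification $R_k(\varepsilon)\to m_k$, which is also what explains the choice of $\mu^*$ in the scheme: up to the scaling $x\mapsto x/\varepsilon$, $\mu^*$ is the weak limit of the normalized measure $x^2\,\nu(dx)1_{|x|\leq\varepsilon}/\sigma_\varepsilon^2$. Without passing through this identification, positivity of $a_i^\varepsilon$ for small $\varepsilon$ cannot be read directly from the linear system; once it is established, everything else amounts to integration-by-parts asymptotics under $\mathbf{(H'-\alpha)}$.
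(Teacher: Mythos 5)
Your proposal is correct and follows essentially the same route as the paper: Karamata-type integration-by-parts asymptotics under $\mathbf{(H'-\alpha)}$, identification of the limits $R_k(\varepsilon)\to m_k$ with the moments of $\mu^*$, invertibility of the Vandermonde matrix to get $a_i^\varepsilon\to a_i^*>0$ (hence positivity for small $\varepsilon$), and then the same bookkeeping for $\lambda_\varepsilon$ and the weighted total variation. One small inaccuracy: your claimed \emph{equality} for $\int|x|^{n+3}|d\nu-d\nu_\varepsilon|$ via ``disjoint supports'' need not hold, since atoms $\varepsilon x_i$ with $|x_i|\leq 1$ lie in $[-\varepsilon,\varepsilon]$ where $\nu$ may itself have point mass (and atoms with $|x_i|>1$ land in the region where $\nu_\varepsilon$ carries $\nu$); but this is harmless because only the upper bound $\leq$ is needed, which is exactly what the paper states.
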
\label{pt.ho.prop}
\begin{corollary}Assume $(\mathbf{H_{n+3}})$ or
  $\mathbf{(H'_{n+3})}$. Then the solution $\hat X$ of \eqref{pt.approxsol}
  with characteristics of $Z^\varepsilon$ given by
  \eqref{pt.ho2} satisfies
$$
|E[f(\hat X_1) - f(X_1)]| = O\left(\lambda_\varepsilon^{1-\frac{n+3}{\alpha}}\right). 
$$
\end{corollary}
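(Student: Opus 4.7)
The plan is a direct assembly of the two estimates already on the table: the weak-error bound of Proposition \ref{pt.basicbound} and the geometric information about $\nu_\varepsilon$ furnished by the proposition just stated (existence of $\nu_\varepsilon$, moment matching through order $n+2$, $\lambda_\varepsilon \sim C_1\varepsilon^{-\alpha}$, and $\int |x|^{n+3}|d\nu-d\nu_\varepsilon| \lesssim C_2\varepsilon^{n+3-\alpha}$).

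First I would check that Proposition \ref{pt.basicbound} is actually applicable to the pair $(Z,Z^\varepsilon)$. The prescription $\gamma_\varepsilon = \gamma - \int_{|z|\leq 1} z\,\nu_\varepsilon(dz)$ in \eqref{pt.ho2}, together with the representation \eqref{pt.Zeps} for $Z^\varepsilon$, gives after rewriting the jump integral with its compensator the standard L\'evy triplet $(0,\nu_\varepsilon, \hat\gamma)$ with $\hat\gamma = \gamma_\varepsilon + \int_{|z|\leq 1} z\,\nu_\varepsilon(dz) = \gamma$; and for $\varepsilon<1$ we have $\nu_\varepsilon = \nu$ on $\{|x|>1\}$. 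The moment-matching condition \eqref{pt.momentmatch} of Proposition \ref{pt.basicbound} requires equality of mixed moments of orders $2,\dots,n+2$ (with Proposition \ref{pt.basicbound}'s internal index taken to be $n+3$), and this is precisely what \eqref{pt.ho.momc} delivers. The regularity hypothesis $(\mathbf H_{n+3})$ or $(\mathbf H'_{n+3})$ is assumed by the corollary itself.

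Consequently Proposition \ref{pt.basicbound}, applied with its $n$ set to $n+3$, yields
\[
|E[f(\hat X_1) - f(X_1)]| \leq C\int_{\mathbb R} |x|^{n+3}\,|d\nu - d\nu_\varepsilon|,
\]
and then the estimate $\int |x|^{n+3}|d\nu - d\nu_\varepsilon| \lesssim C_2\,\varepsilon^{n+3-\alpha}$ from the preceding proposition produces $|E[f(\hat X_1) - f(X_1)]| = O(\varepsilon^{n+3-\alpha})$.

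Finally I would translate the rate from $\varepsilon$ to $\lambda_\varepsilon$: from $\lambda_\varepsilon \sim C_1 \varepsilon^{-\alpha}$ we have $\varepsilon \sim (C_1/\lambda_\varepsilon)^{1/\alpha}$, hence
\[
\varepsilon^{n+3-\alpha} \sim C_1^{(n+3-\alpha)/\alpha}\,\lambda_\varepsilon^{-(n+3-\alpha)/\alpha} = C_1^{(n+3-\alpha)/\alpha}\,\lambda_\varepsilon^{1-(n+3)/\alpha},
\]
which is the claimed $O\bigl(\lambda_\varepsilon^{1-(n+3)/\alpha}\bigr)$ bound. There is no real obstacle here since both ingredients are already proved; the only point that requires a moment's care is the bookkeeping of the drift when passing from the representation \eqref{pt.Zeps} of $Z^\varepsilon$ to its canonical triplet, in order to invoke Proposition \ref{pt.basicbound} cleanly.
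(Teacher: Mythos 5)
Your proposal is correct and is exactly the argument the paper intends (the corollary is stated without explicit proof, following the same pattern as the corollaries for the 3-moment scheme): apply Proposition \ref{pt.basicbound} with its index set to $n+3$, using the triplet identification $(0,\nu_\varepsilon,\gamma)$, the moment matching \eqref{pt.ho.momc}, and then convert $\varepsilon^{n+3-\alpha}$ into $\lambda_\varepsilon^{1-(n+3)/\alpha}$ via $\lambda_\varepsilon \sim C_1 \varepsilon^{-\alpha}$. Your attention to the drift bookkeeping and to $\nu_\varepsilon=\nu$ on $\{|x|>1\}$ (which also needs $\varepsilon$ small enough that all atoms $\varepsilon x_i$ lie in the unit ball, and $\varepsilon<\varepsilon_0$ for positivity of $\nu_\varepsilon$) is a sound verification that the paper leaves implicit.
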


\begin{proof}[of Proposition \ref{pt.ho.prop}]
The moment conditions \eqref{pt.ho.momc} hold by construction.
Using integration by parts, we compute
\begin{align*}
\int_{|z|\leq \varepsilon} z^{2+k}\nu(dz) \sim \frac{(c_+ + (-1)^k
  c_-) \alpha \varepsilon^{2+k-\alpha}}{2+k-\alpha} \quad \text{as
  $\varepsilon\to 0$ for $k\geq 0$.}
\end{align*}
Therefore,
$$
\lim_{\varepsilon\to 0} \frac{1}{\sigma^2_\varepsilon \varepsilon^k}
\int_{|z|\leq \varepsilon} z^{2+k}\nu(dz) = \frac{(2-\alpha)(c_+ +
  (-1)^k c_-)}{(2+k-\alpha)(c_+ + c_-)} = \int_{\mathbb R }x^k \mu^*(dx).
$$
Since the matrix $M_{ij} = (x_j)^i$, $0\leq i \leq n$, $0\leq j \leq n$
is invertible (Vandermonde matrix), this implies that
$\lim_{\varepsilon \to 0} a^\varepsilon_i = a^*_i$. Therefore, there
exists $\varepsilon_0>0$ such that for all $\varepsilon<
\varepsilon_0$, $a^\varepsilon_i >0$ for all $i$ and $\nu_\varepsilon$
is a positive measure. 

We next compute:
\begin{align*}
\nu_\varepsilon(\mathbb R) &= \int_{|x|>\varepsilon}\nu(dx) +
\frac{\sigma_\varepsilon^2}{\varepsilon^2 }\sum_{i=0}^n
\frac{a_i^\varepsilon}{x_i^2} \sim \int_{|x|>\varepsilon}\nu(dx) +
\frac{\sigma_\varepsilon^2}{\varepsilon^2 }\sum_{i=0}^n
\frac{a_i^*}{x_i^2} \\
&\sim \varepsilon^{-\alpha} (c_+ + c_-) \left\{ 1 +
\frac{\alpha}{2-\alpha} \sum_{i=0}^n
\frac{a_i^*}{x_i^2}\right\},
\end{align*}
\begin{align*}
\int_{\mathbb R} |x|^{n+3} |d\nu
  - d\nu_\varepsilon| & \leq \int_{|x|\leq \varepsilon} |x|^{n+3} d\nu
  + \sigma_\varepsilon^2 \varepsilon^{n+1} \sum_{i=0}^n
{a_i^\varepsilon}{|x_i|^{n+1}} \\
&\sim \varepsilon^{n+3 -\alpha} (c_+ + c_-)
\left\{\frac{\alpha}{3+k-\alpha} + \frac{\alpha}{2-\alpha} \sum_{i=0}^n
{a_i^*}{|x_i|^{n+1}}\right\}.
\end{align*}

\end{proof}

\section*{Acknowledgement}
This research is supported by the Chair Financial Risks of the Risk Foundation sponsored by Soci\'et\'e G\'en\'erale, the Chair Derivatives of the Future sponsored by the F\'ed\'eration Bancaire
Fran\c caise, and the Chair Finance and Sustainable Development sponsored by EDF and Calyon.


\begin{thebibliography}{10}


\bibitem{bns_nig}
Barndorff-Nielsen, O.: Processes of normal inverse {G}aussian type.
\newblock Finance Stoch. \textbf{2}, 41--68 (1998)

\bibitem{bruti_liberati.platen.07}
Bruti-Liberati, N., Platen, E.: Strong approximations of stochastic
  differential equations with jumps.
\newblock J. Comput. Appl. Math. \textbf{205}(2), 982--1001 (2007)

\bibitem{finestructure}
Carr, P., Geman, H., Madan, D., Yor, M.: The fine structure of asset returns:
  {A}n empirical investigation.
\newblock J. Bus. \textbf{75}(2), 305--332 (2002)

\bibitem{levybook}
Cont, R., Tankov, P.: Financial Modelling with Jump Processes.
\newblock Chapman \& Hall / CRC Press (2004)

\bibitem{dereich.11}
Dereich, S.: Multilevel {M}onte {C}arlo algorithms for {L}\'evy-driven {SDE}s
  with {G}aussian correction.
\newblock Ann. Appl. Probab. \textbf{21}(1), 283--311 (2011)

\bibitem{jkmp.05}
Jacod, J., Kurtz, T.G., M{\'e}l{\'e}ard, S., Protter, P.: The approximate
  {E}uler method for {L}\'evy driven stochastic differential equations.
\newblock Ann. Inst. H. Poincar\'e Probab. Statist. \textbf{41}(3), 523--558
  (2005)

\bibitem{kohatsu.tankov.09}
Kohatsu-Higa, A., Tankov, P.: Jump-adapted discretization schemes for
  {L}\'evy-driven {SDE}s.
\newblock Stoch. Proc. Appl. \textbf{120}, 2258--2285 (2010)

\bibitem{mstz.08}
Mordecki, E., Szepessy, A., Tempone, R., Zouraris, G.E.: Adaptive weak
  approximation of diffusions with jumps.
\newblock SIAM J. Numer. Anal. \textbf{46}(4), 1732--1768 (2008)

\bibitem{protter_talay}
Protter, P., Talay, D.: The {E}uler scheme for {L}\'evy driven stochastic
  differential equations.
\newblock Ann. {P}robab. \textbf{25}(1), 393--423 (1997)

\bibitem{rosinski.04}
Rosi\'nski, J.: Tempering stable processes.
\newblock Stoch. Proc. Appl. \textbf{117}, 677--707 (2007)

\bibitem{rubenthaler}
Rubenthaler, S.: Numerical simulation of the solution of a stochastic
  differential equation driven by a {L}\'evy process.
\newblock Stoch. Proc. Appl. \textbf{103}(2), 311--349 (2003)

\end{thebibliography}
\end{document}